\newtheorem{theorem}{Theorem}
\theoremstyle{plain}
\newtheorem{corollary}{Corollary}
\newtheorem{example}{Example}
\newtheorem{lemma}{Lemma}
\newtheorem{proposition}{Proposition}
\numberwithin{equation}{section}
\begin{document}

\title{Simplest cubic fields}
\author{Q. Mushtaq}
\address{Department of Mathematics, Quaid-i-Azam University, Islamabad,
Pakistan}
\email{qmushtaq@apllo.net.pk}
\author{S. Iqbal}
\subjclass [2000] {Primary 11R16,  11G99}
\keywords{Modular group, Cubic cyclic field, Linear fractional
transformations}

\begin{abstract}
Let $Q(\alpha)$ be the simplest cubic field, it is known that $Q(\alpha)$ can be generated by adjoining a root of the irreducible equation $x^{3}-kx^{2}+(k-3)x+1=0$, where $k$ belongs to $Q$. In this paper we have established a relationship between $\alpha$, $\alpha'$  and $k,k'$ where $\alpha$ is a root of the equation $x^{3}-kx^{2}+(k-3)x+1=0$ and $\alpha'$ is a root of the same equation with $k$ replaced by $k'$ and $Q(\alpha)=Q(\alpha')$.
\end{abstract}

\maketitle
\section{Introduction}
Every simplest cubic field can be generated by adjoining a root of the equation \cite{sd}
\begin{equation}
x^{3}-kx^{2}+(k-3)x+1=0,  \tag{1}
\end{equation}

for some suitable \(k\), where $k$ is a parameter belonging to set of rational numbers. Conversely,
the roots of $(1)$, for some rational number $k$, are either rational or generate a simplest cubic field.

It can be observed that two different values of parameters  can generate the same simplest cubic field. In this paper we will classify the parameters according to the corresponding  simplest cubic fields. We also found the relation between the roots of equation (1) for different values of parameters belonging to the same class.

We assert that these relations could be a step forward to understand about integral points on elliptic curves related to simples cubic fields \cite{ds}

Each element $\omega $ of $Q(\alpha )$ has
the linear fractional representation \(\omega =\frac{a\rho+b}{c\rho +d}\),
where the determinant of $\omega $, denoted by $\det (\omega )$, is defined
as $ad-bc.$ An element $\omega $ of $Q(\alpha )$ is rational if and only if $\det(\omega )=0$.

\section{Relationship between $\protect\alpha $ and $\acute{\protect\alpha}$
\ when $Q(\protect\alpha )=Q(\protect\alpha ^{\prime })$}

First we split the simplest cubic field into classes. Each element$ \omega $
of $Q(\alpha)$ is related to a linear fractional transformation $f \in PSL(2,Q)$ in the
way that all its conjugates can be obtained by applying $f^n$, for some
integer $n$, over $\omega $. We shall call this correspondence as ``linear
fractional transformation related to an element of $Q(\alpha)$''.
This correspondence defines an equivalence relation on $Q(\alpha )$ and each class corresponds to a unique element of \(PSL(2,Q)\). 

We call the equivalence class related to a linear fractional transformation $ f$ as $f$-class. The following corollary is straightforward to see.

\begin{corollary}
If $\omega $ $\in $ $Q(\rho )$, then $\omega $ and its conjugates belong to the same $f$-class.
\end{corollary}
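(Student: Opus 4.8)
The plan is to unwind the definition of the ``linear fractional transformation related to an element'' and to use the fact that the full set of conjugates of $\omega$ is a single Galois orbit, and hence does not depend on which conjugate one starts from. Write $\omega^{(0)}=\omega,\ \omega^{(1)},\ \omega^{(2)}$ for the (at most three) conjugates of $\omega$ over $Q$, and let $f\in PSL(2,Q)$ be the transformation related to $\omega$. By definition $f$ has the property that every conjugate of $\omega$ is of the form $f^{n}(\omega)$ for some integer $n$; since there are at most three conjugates, $f$ acts on the set $\{\omega^{(0)},\omega^{(1)},\omega^{(2)}\}$ as a cyclic permutation $\omega^{(0)}\mapsto\omega^{(1)}\mapsto\omega^{(2)}\mapsto\omega^{(0)}$ (degenerating to the identity when $\omega$ is rational).

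First I would record the elementary observation that ``being a conjugate of'' is a symmetric and transitive relation on $Q(\rho)$, so that the set of conjugates of $\omega^{(i)}$ equals the set of conjugates of $\omega$ for each $i$. Next I would observe that, because $f$ acts as a single cycle on the finite set $\{\omega^{(0)},\omega^{(1)},\omega^{(2)}\}$, the orbit of \emph{any} one of these points under the powers of $f$ is the whole set; that is, $\{f^{n}(\omega^{(i)}):n\ \text{an integer}\}=\{\omega^{(0)},\omega^{(1)},\omega^{(2)}\}$ for every $i$. Thus $f$ is itself a linear fractional transformation whose iterates produce all the conjugates of $\omega^{(i)}$, and by the uniqueness clause in the construction --- each equivalence class corresponds to a unique element of $PSL(2,Q)$ --- the transformation related to $\omega^{(i)}$ is forced to be $f$. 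Hence $\omega^{(i)}$ lies in the $f$-class for every $i$, which is exactly the assertion.

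It remains only to dispose of the degenerate case in which $\omega$ is rational: then $\det(\omega)=0$, $\omega$ is its own unique conjugate, the related transformation is the class of the identity in $PSL(2,Q)$, and the statement holds trivially. I expect the sole obstacle here to be bookkeeping rather than mathematics: one must be careful to read the correspondence $\omega\mapsto f$ as packaging the permutation-of-conjugates data attached to $\omega$, and not something sensitive to an ordering of the roots; once that is in place, the argument reduces to the trivial remark that a cyclic permutation of a finite set has a single orbit.
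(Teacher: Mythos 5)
Your argument is correct and is precisely the ``straightforward'' verification the paper has in mind: the paper offers no proof at all, merely asserting the corollary is easy to see, and your unwinding --- the conjugate set of $\omega^{(i)}$ equals that of $\omega$, the $\langle f\rangle$-orbit of any conjugate coincides with the orbit of $\omega$ since $f$ is invertible, and the uniqueness clause then forces the transformation attached to each conjugate to be $f$ --- is exactly the intended reasoning. The only caution is that your claim that $f$ acts as a cyclic permutation of the conjugates presumes the orbit equals the conjugate set rather than merely containing it, but this is harmless since the orbit-equality step you actually use needs only that $f$ has an inverse in $PSL(2,Q)$.
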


The elements belonging to $y$-class, where \(y\) is defined as \(z \longmapsto \frac{z-1}{z}\),  are instrumental in establishing a relationship between $\alpha, \alpha'$ \ and $k,k'$ where $\alpha $ is a root of Equation 1 and $\alpha'$ is a root of same equation with $k$ replaced by $k'$ and $Q(\alpha )=Q(\alpha')$. The following example shows an element belonging to $y$-class.

\begin{example}
The roots of the equation $x^{3}-3x^{2}+1=0$ are\ given by 
\begin{equation*}
\rho \text{, }\frac{\rho -1}{\rho }\text{, }\frac{-1}{\rho -1}\text{.}
\end{equation*}%
where $\rho =2\cos (\pi /9)+1$. The algebraic number $\rho $ belongs to $y$%
-class
\end{example}

Note that in $y$-class there are more than one elements, for instance, for $%
k=3$ the roots of Equation 1, are $\rho =2\cos (\pi /9)+1$, $\rho 
{\acute{}}
=-\cos (\pi /9)+1-\sqrt{3}\sin (\pi /9)$ and $\rho ^{\prime \prime }=-\cos
(\pi /9)+1+\sqrt{3}\sin (\pi /9)$ and the following two numbers belong to $y$-class
\begin{equation*}
\frac{5\rho -2}{2\rho +3}\text{and }\frac{3\rho -1}{\rho +2}
\end{equation*}%
and the following equations 
\begin{equation*}
x^{3}-\left( -\frac{51}{73}\right) x^{2}+(-\frac{51}{73}-3)x+1=0\text{, }
\end{equation*}%
\begin{equation*}
x^{3}-\left( \frac{3}{19}\right) x^{2}+(\frac{3}{19}-3)x+1=0
\end{equation*}

are respectively satisfied by them.

\begin{lemma}
An element $\omega $ of $Q( \alpha )$ belong to $y$-class if and only if it is a root of the irreducible equation $x^{3}-kx^{2}+(k-3)x+1=0$, where $k$ is a rational number.
\end{lemma}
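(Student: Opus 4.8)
The plan is to prove both directions by analyzing the action of the transformation $y:z\mapsto \frac{z-1}{z}$ and using the defining property of an $f$-class, namely that the conjugates of $\omega$ are exactly the iterates $y(\omega), y^2(\omega)$. First I would record the key algebraic fact that $y$ has order $3$ in $PSL(2,\mathbb{Q})$: iterating $z\mapsto\frac{z-1}{z}$ gives $y^2(z)=\frac{-1}{z-1}$ and $y^3(z)=z$. Thus if $\omega$ lies in the $y$-class, its three conjugates are $\omega$, $\frac{\omega-1}{\omega}$, and $\frac{-1}{\omega-1}$, and these are necessarily distinct (so $\omega$ is a genuine cubic irrational, not rational, and its minimal polynomial has degree $3$).

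For the forward direction, I would compute the minimal polynomial of $\omega$ directly from its three conjugates. The monic cubic with roots $\omega_1=\omega$, $\omega_2=\frac{\omega-1}{\omega}$, $\omega_3=\frac{-1}{\omega-1}$ is $x^3 - e_1 x^2 + e_2 x - e_3$ where $e_1,e_2,e_3$ are the elementary symmetric functions. The crucial computation is $e_3 = \omega_1\omega_2\omega_3 = \omega\cdot\frac{\omega-1}{\omega}\cdot\frac{-1}{\omega-1} = -1$, which already pins down the constant term as $+1$. A short computation should give $e_1 = e_2 + 3$: writing $k=e_1$, one checks $e_2 = \omega_1\omega_2+\omega_2\omega_3+\omega_3\omega_1 = k-3$. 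Hence $\omega$ satisfies $x^3 - kx^2 + (k-3)x + 1 = 0$ with $k=\omega+\frac{\omega-1}{\omega}+\frac{-1}{\omega-1}\in\mathbb{Q}$ (it is fixed by the Galois action since it is a symmetric function of the conjugates, hence rational). Irreducibility follows because $\omega$ is not rational (the three conjugates being distinct).

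For the converse, suppose $\omega$ is a root of the irreducible equation $x^3-kx^2+(k-3)x+1=0$ for some $k\in\mathbb{Q}$. By the result quoted in the introduction (from \cite{sd}) the splitting field is the cyclic cubic $\mathbb{Q}(\omega)$, and the standard fact about these fields is that the other two roots are precisely $\frac{\omega-1}{\omega}$ and $\frac{-1}{\omega-1}$ — i.e. the nontrivial Galois automorphism sends $\omega\mapsto\frac{\omega-1}{\omega}$. I would verify this by substituting $z=\frac{\omega-1}{\omega}$ into the cubic and checking, using $\omega^3=k\omega^2-(k-3)\omega-1$, that it again vanishes; since it is distinct from $\omega$ and lies in $\mathbb{Q}(\omega)$, and the polynomial has at most three roots, the set $\{\omega,\frac{\omega-1}{\omega},\frac{-1}{\omega-1}\}$ is exactly the conjugate set. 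This says exactly that the conjugates of $\omega$ are obtained by applying $y^n$, so $\omega$ lies in the $y$-class by definition.

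The main obstacle I anticipate is not conceptual but bookkeeping: the substitution $z\mapsto\frac{z-1}{z}$ into the cubic produces a rational function whose numerator must be reduced modulo the relation $\omega^3=k\omega^2-(k-3)\omega-1$, and one must be careful that the symmetric-function computation of $e_1$ and $e_2$ is done cleanly (clearing the denominators $\omega(\omega-1)$ correctly). I would also need to be slightly careful at the edge cases $\omega=0$ or $\omega=1$, but these are immediately excluded since $0$ and $1$ are not roots of $x^3-kx^2+(k-3)x+1$ (plugging in gives $1\neq 0$ and $-1\neq 0$ respectively), so the transformation $y$ is well-defined on the whole conjugate set.
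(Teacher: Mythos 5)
Your proposal is correct and follows essentially the same route as the paper: expand the monic cubic whose roots are the $y$-orbit $\omega$, $\frac{\omega-1}{\omega}$, $\frac{-1}{\omega-1}$ to identify the coefficients (the paper does this by multiplying out the linear factors, you via the elementary symmetric functions, noting $e_3=-1$ and $e_1-e_2=3$), and conversely observe that the three roots of the given irreducible cubic form exactly such a $y$-orbit. Your version is a bit more careful on two points the paper glosses over — why $k$ is rational (symmetry under the Galois action) and the actual verification that $\frac{\omega-1}{\omega}$ is again a root — but these are refinements of the same argument, not a different one.
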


\begin{proof}
Let $\omega$ be an element of $y$-class so the other conjugates of $\omega$
are $\frac{\omega -1}{\omega }$, $\frac{-1}{\omega -1}$ and the equation satisfied by them is: 
\begin{equation*}
(x-\omega)(x-\frac{\omega -1}{\omega })(x+\frac{1}{\omega -1})=0
\end{equation*}
which can be simplified as 
\begin{equation*}
x^{3}-kx^{2}+(k-3)x+1=0
\end{equation*}
where $k=\omega +\frac{\omega -1}{\omega }-\frac{1}{\omega -1}$. Here $k$ must be rational because it is evolved from the equation satisfied by $\omega$,
which is algebraic over $Q$ of degree three.

Conversely, let $\omega$ satisfy an irreducible equation of the form $x^{3}-kx^{2}+(k-3)x+1=0$, where $k$ is a rational number. Then the other two roots of the equation are $\frac{\omega -1}{\omega }$ and $\frac{-1}{\omega -1}$.
Moreover, $(\omega )y=\frac{(\omega -1)}{\omega }$, $(\frac{\omega -1}{\omega})y=\frac{-1}{(\omega -1)}$ and $(\frac{-1}{\omega -1})y=\omega$ imply that $\omega$ and its conjugates lie in the same triangle.
\end{proof}

Thus, corresponding to each ``triplet of conjugates" of $y$-class there is a rational number $k$. We call such correspondence as the element of $y$-class with parameter $k$.

We use the above classification of elements of $y$-class to prove the
following.

\begin{proposition}
Elements of $Q(\alpha)$ belonging to $y$-class are of the form $\frac{(c+d)\alpha -c}{c\alpha+d}$, where $c$ and $d$ are non-zero integers.
\end{proposition}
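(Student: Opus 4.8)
The plan is to encode the condition that $\omega$ lies in the $y$-class as a matrix commutation relation and then to compute the centralizer of $y$ explicitly. Throughout I fix the generator $\sigma$ of the Galois group of $Q(\alpha)$ over $Q$ determined by $\sigma(\alpha)=(\alpha)y=\tfrac{\alpha-1}{\alpha}$; this is legitimate because $\alpha$ is a root of Equation $1$, so by the Lemma its three conjugates are $\alpha,\ \tfrac{\alpha-1}{\alpha},\ \tfrac{-1}{\alpha-1}$. With this normalization, an element $\omega$ of $Q(\alpha)$ lies in the $y$-class exactly when $\sigma(\omega)=(\omega)y=\tfrac{\omega-1}{\omega}$. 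Such an $\omega$ has three distinct conjugates, hence is irrational, so in its linear fractional representation $\omega=\dfrac{a\alpha+b}{c\alpha+d}$ (with rational $a,b,c,d$) the determinant $ad-bc$ is nonzero; write $G=\left(\begin{smallmatrix}a&b\\c&d\end{smallmatrix}\right)$ and $Y=\left(\begin{smallmatrix}1&-1\\1&0\end{smallmatrix}\right)$.

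First I would compute $\sigma(\omega)$ in two ways. As $G$ has rational entries its linear fractional action commutes with the $Q$-automorphism $\sigma$, so $\sigma(\omega)$ is the value at $\sigma(\alpha)=\tfrac{\alpha-1}{\alpha}$ of the transformation $G$, that is, the value at $\alpha$ of the transformation $GY$. On the other hand the $y$-class hypothesis gives $\sigma(\omega)=\tfrac{\omega-1}{\omega}$, the value at $\alpha$ of $YG$. Hence $GY$ and $YG$ agree at $\alpha$; applying $\sigma$ and $\sigma^{2}$, and using rationality once more, they agree at $\sigma(\alpha)$ and $\sigma^{2}(\alpha)$ as well. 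Since Equation $1$ is irreducible, hence separable, these three conjugates are distinct, and a Möbius transformation is determined by its values at three points, so $GY=\lambda YG$ for a scalar $\lambda$. Comparing determinants gives $\lambda^{2}=1$, and $\lambda=-1$ is excluded because $GYG^{-1}=-Y$ would force $Y$ and $-Y$ to share a characteristic polynomial, whereas these are $\lambda^{2}-\lambda+1$ and $\lambda^{2}+\lambda+1$. Therefore $GY=YG$.

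It then remains to compute the centralizer of $Y$ in the ring of $2\times2$ rational matrices. Because $\lambda^{2}-\lambda+1$ is irreducible over $Q$, the matrix $Y$ is non-scalar, so its centralizer is the two-dimensional algebra $Q[Y]=\{\,dI+cY : c,d\in Q\,\}$; explicitly $dI+cY=\left(\begin{smallmatrix}c+d&-c\\c&d\end{smallmatrix}\right)$, with determinant $c^{2}+cd+d^{2}$, which vanishes only at $c=d=0$. Hence $G$ is a scalar multiple of $\left(\begin{smallmatrix}c+d&-c\\c&d\end{smallmatrix}\right)$ for some $c,d$, giving $\omega=\dfrac{(c+d)\alpha-c}{c\alpha+d}$; clearing denominators we may take $c,d$ integers, not both zero, the boundary cases $c=0$ and $d=0$ merely recovering $\alpha$ and $\tfrac{\alpha-1}{\alpha}$. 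Conversely $\left(\begin{smallmatrix}c+d&-c\\c&d\end{smallmatrix}\right)$ visibly commutes with $Y$ (both products equal $\left(\begin{smallmatrix}d&-(c+d)\\c+d&-c\end{smallmatrix}\right)$), so every $\omega$ of this shape lies in the $y$-class. I expect the one genuinely delicate point to be the passage from ``$GY$ and $YG$ agree at $\alpha$'' to ``$GY=YG$'': it relies on $\sigma$ acting as $y$ on the conjugates of $\omega$ just as on those of $\alpha$, i.e.\ on $\omega$ and $\alpha$ lying in the \emph{same} $y$-class rather than in the $y$- and $y^{2}$-classes; under the weaker hypothesis one would obtain only $GYG^{-1}=Y^{2}$, admitting extra maps such as $z\mapsto 1/z$.
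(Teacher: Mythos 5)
Your proof is correct, and it arrives at the same algebraic crux as the paper --- the relation $GY=\lambda YG$ with $\lambda=\pm1$ --- but by a genuinely different route. The paper writes out $(\omega)y$ and the conjugate $\omega'$ explicitly as linear fractional expressions in $\alpha$, equates them up to a proportionality factor $\lambda$ (implicitly invoking the linear independence of $1,\alpha,\alpha^{2}$ to pass from equality of values at $\alpha$ to proportionality of coefficients), extracts four linear equations, rules out $\lambda=-1$ because the system then forces $a=b=c=d=0$, and solves the $\lambda=1$ case by hand. You instead derive the matrix identity from the Galois equation $\sigma(\omega)=(\omega)y$ plus the fact that a M\"obius transformation is determined by its values at the three distinct conjugates of $\alpha$, exclude $\lambda=-1$ by comparing the characteristic polynomials of $Y$ and $-Y$, and finish by identifying the centralizer of the non-scalar matrix $Y$ as $Q[Y]=\{dI+cY\}$, whose elements are exactly the matrices of the claimed shape. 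Your version buys two things the paper does not supply: an explicit justification of the step from pointwise agreement at $\alpha$ to the matrix relation, and the closing remark about orientation, which is not pedantry --- under the paper's own Lemma the element $1/\alpha$ (a root of Equation (1) with $k$ replaced by $3-k$) would qualify for the $y$-class yet satisfies only $GYG^{-1}=Y^{2}$ projectively and is not of the stated form, so the Proposition is only true under the finer normalization $\sigma(\omega)=(\omega)y$ that both your proof and the paper's proof tacitly adopt. Two minor points: the Proposition says ``non-zero integers'' but, as you note and as the paper's subsequent Theorem confirms, the right condition is ``not both zero'' (the cases $c=0$ and $d=0$ give $\alpha$ and $(\alpha)y$, which certainly lie in the $y$-class); and you reuse the letter $\lambda$ both for the scalar and as the variable of the characteristic polynomials, which is worth cleaning up.
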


\begin{proof}
Let $\omega =\frac{a\alpha +b}{c\alpha +d}$ be any primitive element in $Q(\alpha)$, that is, $ad-bc\neq 0$. Then the conjugates of $\omega $ are given by 
\begin{equation*}
\omega'=\frac{(a+b)\alpha -a}{(c+d)\alpha -c}\text{,}
\end{equation*}
and 

\begin{equation*}
\omega''=\frac{b\alpha -(a+b)}{d\alpha -(c+d)}.
\end{equation*}

Our next aim is to find the values of $a,b,c$ and $d$ such that $\omega$ belongs to \(y\)-class, that is, when 
\begin{eqnarray}
(\omega )y &=&\omega'\\
(\omega')y &=&\omega'' \notag \\
(\omega'')y &=&\omega  \notag.
\end{eqnarray}
Now from the first of these equations, we get 
\begin{equation*}
\frac{(a-c)\alpha +(b-d)}{a\alpha +b}=\frac{\lambda (a+b)\alpha -\lambda a}{\lambda (c+d)\alpha -\lambda c}\text{,}
\end{equation*}
which further yields the following four equations 
\begin{equation*}
\begin{array}{cc}

a(1-\lambda)-c-\lambda b=0 \text{, } & \lambda a+b-d=0, \\ 
a-\lambda (c+d)=0\text{ and } & b+\lambda c=0.
\end{array}
\end{equation*}
But the $det(\frac{\lambda (a+b)\alpha -\lambda a}{\lambda (c+d)\alpha -\lambda c})=det(\omega )$ which is possible if $\lambda = \pm 1$. For $\lambda =-1$,
the above system of equations gives $a=b=c=d=0$, contradicting the
primitivity of $\omega $. The solution of the system for $\lambda =1$ is $a=c+d$, $b=-c$, $c=c$, $d=d$. This solution satisfies the other two
Equations of (2.1). So the elements of \(y\)-class are of the form $\frac{(c+d)\alpha -c}{c\alpha +d}$.
\end{proof}

Now we are in a position to state the theorem which establishes a
relationship between $\alpha $ and $\alpha ^{\prime }$ whenever $Q(\alpha
)=Q(\alpha ^{\prime })$.

\begin{theorem}
If $\alpha $ is a root of the irreducible equation $x^3-kx^2+(k-3)x+1=0$
and $\alpha'$is a root of the same irreducible equation except $k$ is replaced by $k'$ then $Q(\alpha )=Q(\alpha')$ if and only if $\alpha'$ is of the form $\frac{(c+d)\alpha -c}{c\alpha +d}$ or $\alpha$ is of the form $\frac{(c+d)\alpha'-c}{c\alpha'+d}$ for some integer values of $c$ and $d$, not both zero.
\end{theorem}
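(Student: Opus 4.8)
The plan is to obtain the theorem almost immediately from the Lemma and the Proposition; what is left is only a little bookkeeping about integrality and a couple of degenerate cases.

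For the ``if'' direction, suppose first that $\alpha'=\frac{(c+d)\alpha-c}{c\alpha+d}$ with $c,d$ integers, not both zero. I would first note that $c\alpha+d\neq 0$, since otherwise $\alpha$ would be rational, contradicting the irreducibility of its defining cubic; thus $\alpha'$ is a genuine element of $Q(\alpha)$, so $Q(\alpha')\subseteq Q(\alpha)$. As $\alpha'$ is by hypothesis a root of the irreducible cubic $x^{3}-k'x^{2}+(k'-3)x+1$, we have $[Q(\alpha'):Q]=3=[Q(\alpha):Q]$, and the tower law forces $Q(\alpha')=Q(\alpha)$. If instead $\alpha=\frac{(c+d)\alpha'-c}{c\alpha'+d}$, the identical argument with the roles of $\alpha$ and $\alpha'$ interchanged again gives $Q(\alpha)=Q(\alpha')$.

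For the ``only if'' direction, assume $Q(\alpha)=Q(\alpha')$. Because $\alpha'$ is a root of the irreducible equation $x^{3}-k'x^{2}+(k'-3)x+1=0$ with $k'\in Q$, the Lemma, applied with $\omega=\alpha'$ (which lies in $Q(\alpha)$ since $Q(\alpha')=Q(\alpha)$), shows that $\alpha'$ lies in the $y$-class; and since $\alpha'$ generates $Q(\alpha')=Q(\alpha)$ over $Q$ it is a primitive element of $Q(\alpha)$. The Proposition then applies to $\alpha'$ and yields a representation $\alpha'=\frac{(c+d)\alpha-c}{c\alpha+d}$. Two observations complete this direction. First, the transformation $z\mapsto\frac{(c+d)z-c}{cz+d}$ is unchanged when $(c,d)$ is multiplied by a common nonzero scalar, so after clearing denominators we may take $c,d\in\mathbb{Z}$, and they cannot both be zero, for otherwise the determinant $c^{2}+cd+d^{2}$ of the associated matrix would vanish and $\alpha'$ would be rational. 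This gives the first of the two forms in the statement; the second follows for free, since solving $\alpha'=\frac{(c+d)\alpha-c}{c\alpha+d}$ for $\alpha$ yields $\alpha=\frac{d\alpha'+c}{-c\alpha'+(c+d)}=\frac{(c''+d'')\alpha'-c''}{c''\alpha'+d''}$ with $c''=-c$, $d''=c+d$, again integers not both zero.

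I do not anticipate a serious obstacle, because the real content is already packaged in the Lemma and the Proposition. The steps that require some care are: (i) normalising $(c,d)$ to integers using the scale invariance of the linear fractional transformation; (ii) the boundary cases $c=0$, which gives $\alpha'=\alpha$, and $d=0$, which gives $\alpha'=\frac{\alpha-1}{\alpha}$, both corresponding to $k'=k$ and both compatible with ``$c,d$ not both zero''; and (iii) noting that $\alpha'$, like every element of the cubic field $Q(\alpha)$, admits a representation $\frac{a\alpha+b}{c\alpha+d}$ with $a,b,c,d\in Q$ — a short dimension count in the $3$-dimensional $Q$-vector space $Q(\alpha)$ — so that the Proposition is indeed applicable to it.
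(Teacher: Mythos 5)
Your proof is correct and follows exactly the route the paper intends: the paper states this theorem without any proof at all, presenting it as an immediate consequence of the preceding Lemma and Proposition, which is precisely how you derive it. The details you supply --- the degree argument for the ``if'' direction, the scale-invariance normalisation of $(c,d)$ to integers, the explicit inversion producing the second form with $c''=-c$, $d''=c+d$, and the dimension count showing every element of $Q(\alpha)$ admits a representation $\frac{a\alpha+b}{c\alpha+d}$ --- are all sound and in fact fill gaps the paper leaves entirely to the reader.
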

\subsection{Relationship between $k$ and $k'$ \ when $Q(\alpha )=Q(\alpha')$}
Hence we will answer the next question about  simplest cubic field. First we will find the equation satisfied by any element $\omega =%
\frac{a\rho +b}{c\rho +d}$ of the simplest cubic field $Q(\rho )$ where $\rho$
is the root of the equation $x^{3}-kx^{2}+(k-3)x+1=0$. The representation of $\omega $ as a linear combination of $\{1,\rho ,\rho ^{2}\}$ obtained by the method of indeterminate coefficients is given by 
\begin{equation*}
\omega =\frac{a_{1}^{2}\rho +b_{1}\rho +c_{1}}{d_{1}}
\end{equation*}

where 

\begin{eqnarray*}
a_{1} &=&(-c(bc-da)) \\
b_{1} &=&((ck+d)(bc-da)) \\
c_{1} &=&(ac^{2}-bc^{2}k+3bc^{2}-bdck-d^{2}b) \\
d_{1} &=&(-c^{2}kd+3c^{2}d-d^{2}ck-d^{3}+c^{3})
\end{eqnarray*}

are rational integers. Then equation satisfied by $\omega $ is: 

\begin{equation*}
z^{3}+z^{2}\left( \frac{2bdck+3d^{2}b+bc^{2}k-3ac^{2}-3bc^{2}+kd^{2}a+2ckda-6cda}{-c^{2}kd+3c^{2}d-d^{2}ck-d^{3}+c^{3}}\right)
\end{equation*}

\begin{equation*}
-z\left( \frac{-3ca^{2}+cb^{2}k-6cba+2cbka+2bdka+a^{2}kd+3db^{2}-3a^{2}d}{-c^{2}kd+3c^{2}d-d^{2}ck-d^{3}+c^{3}}\right)
\end{equation*}
\begin{equation*}
+\left( \frac{-3ba^{2}+b^{3}+b^{2}ka+bka^{2}-a^{3}}{-c^{2}kd+3c^{2}d-d^{2}ck-d^{3}+c^{3}}\right) =0\text{,}
\end{equation*}

Equation satisfied by elements of $y$-class can be obtained by putting $a=c+d $, $b=-c$ in above equation 

\begin{equation*}
z^{3}-z^{2}t+z(t-3)+1=0\text{,}
\end{equation*}

where $t$ is given by 

\begin{equation*}
t=\frac{9d^{2}c+c^{3}k+9c^{2}d-3d^{2}ck-kd^{3}}{-c^{2}kd+3c^{2}d-d^{2}ck-d^{3}+c^{3}}\text{.}
\end{equation*}

The following three elements

\begin{equation*}
w=(\frac{(c+d)\alpha -c}{c\alpha +d}),w
{\acute{}}
=(\frac{d\alpha -(c+d)}{(c+d)\alpha -c}),w^{\prime \prime }=(\frac{-c\alpha
-d}{d\alpha -(c+d)})
\end{equation*}

are the roots of the Equation 3.2. Hence if $\alpha $ is a root of $x^{3}-kx^{2}+(k-3)x+1=0$ and $\alpha 
{\acute{}}
$ is a root of the same equation, except $k$ is replaced by $k$, such that $Q(\alpha )=Q(\alpha ^{\prime })$ then the relationship between 
$k$ and $k
{\acute{}}
$ is given by 
\begin{equation*}
k
{\acute{}}
=\frac{9d^{2}c+c^{3}k+9c^{2}d-3d^{2}ck-kd^{3}}{-c^{2}kd+3c^{2}d-d^{2}ck-d^{3}+c^{3}}\text{.}
\end{equation*}

From now onwards, we shall write the above relation between $k$ and $k'$ as $k'=T(c,d,k)$.

Hence we can state:

\begin{theorem}
If $\alpha $ is a root of the equation $x^{3}-kx^{2}+(k-3)x+1=0$ and $\alpha' $
 is a root of the same equation, except $k$ is replaced by $k'$, then $Q(\alpha )=Q(\alpha')$ if and only if \(k'=T(c,d,k)\)
for integers $c$ and $d$ not both zero.

\end{theorem}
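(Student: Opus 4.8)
The plan is to reduce the second theorem to the first one, which has already identified precisely when $Q(\alpha)=Q(\alpha')$: namely, exactly when $\alpha'=\frac{(c+d)\alpha-c}{c\alpha+d}$ (or the symmetric statement with $\alpha,\alpha'$ interchanged) for integers $c,d$ not both zero. So the content remaining is purely computational: to show that under this substitution the parameter of the minimal polynomial of $\alpha'$ is exactly $T(c,d,k)$, and conversely that $k'=T(c,d,k)$ forces $\alpha'$ into the stated fractional form. First I would record that, by \thmref{} (the first theorem), $Q(\alpha)=Q(\alpha')$ iff $\alpha'=\frac{(c+d)\alpha-c}{c\alpha+d}$ for some integers $c,d$ not both zero; the symmetric alternative in that theorem is subsumed because if $\alpha=\frac{(c+d)\alpha'-c}{c\alpha'+d}$ one can invert the linear fractional map and obtain $\alpha'$ in the same shape (up to sign of $c,d$, which does not matter since $T$ is homogeneous of degree $0$ in $(c,d)$).

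Next I would carry out the forward direction. Given $\alpha'=\frac{(c+d)\alpha-c}{c\alpha+d}$ with $\alpha$ a root of $x^3-kx^2+(k-3)x+1=0$, I would invoke the displayed computation preceding the theorem: specialising the general formula for the minimal polynomial of $\omega=\frac{a\alpha+b}{c\alpha+d}$ to $a=c+d$, $b=-c$ yields the equation $z^3-z^2t+z(t-3)+1=0$ with
\[
t=\frac{9d^{2}c+c^{3}k+9c^{2}d-3d^{2}ck-kd^{3}}{-c^{2}kd+3c^{2}d-d^{2}ck-d^{3}+c^{3}}=T(c,d,k).
\]
Since $\alpha'$ belongs to the $y$-class (by the Proposition, elements of that form are exactly the $y$-class elements), its minimal polynomial is of the simplest-cubic shape with parameter $k'$, and by uniqueness of the parameter attached to a triplet of conjugates (the remark following the Lemma) we get $k'=t=T(c,d,k)$. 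One must also check that the denominator $-c^{2}kd+3c^{2}d-d^{2}ck-d^{3}+c^{3}$ is nonzero, which follows because it equals $d_1$ in the notation above and $d_1=0$ would force $\omega$ rational, contradicting $[Q(\alpha'):Q]=3$; this is where I would use irreducibility of the original equation.

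For the converse, suppose $k'=T(c,d,k)$ for some integers $c,d$ not both zero. Set $\beta:=\frac{(c+d)\alpha-c}{c\alpha+d}\in Q(\alpha)$; by the computation just invoked, $\beta$ satisfies $x^3-k'x^2+(k'-3)x+1=0$. Since $\beta\in Q(\alpha)$ and $\beta$ is not rational (its determinant $(c+d)d-(-c)c=d^2+cd+c^2$ — no, more simply, if $\beta$ were rational then $Q(\alpha)$ would contain a rational root of the original irreducible cubic, impossible), $\beta$ generates a cubic subfield of $Q(\alpha)$, hence $Q(\beta)=Q(\alpha)$. Now both $\alpha'$ and $\beta$ are roots of the same irreducible cubic $x^3-k'x^2+(k'-3)x+1=0$, so $Q(\alpha')$ and $Q(\beta)$ are conjugate cubic fields; but a simplest cubic field is cyclic (Galois), so all its conjugates coincide, giving $Q(\alpha')=Q(\beta)=Q(\alpha)$. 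The main obstacle is the bookkeeping in the forward direction: one must be confident that the specialisation $a=c+d,\ b=-c$ of the bulky general minimal-polynomial formula really collapses to $z^3-z^2t+z(t-3)+1=0$ with the stated $t$, and that no spurious cancellation (vanishing denominator) occurs; everything else is a clean appeal to the first theorem, the Proposition, and the cyclicity of simplest cubic fields.
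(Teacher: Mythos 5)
Your proposal is correct and follows essentially the same route as the paper: the paper's own justification is precisely the displayed computation that specialises the minimal polynomial of $\omega=\frac{a\alpha+b}{c\alpha+d}$ to $a=c+d$, $b=-c$, yielding the parameter $t=T(c,d,k)$, combined with the preceding theorem characterising when $Q(\alpha)=Q(\alpha')$. You supply somewhat more logical glue than the paper does (notably the converse direction via the nonvanishing determinant $c^{2}+cd+d^{2}$ and the cyclicity of the simplest cubic field, which the paper leaves implicit), but the decomposition of the argument is the same.
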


We define a relation \(R\) on set of rational numbers as  $aRb$ if there exist integers \(c\) and \(d\) such that $k'=T(c,d,k)$.
It can be easily seen that the relation \(R\) is equivalence relation. Hence,

\begin{theorem}
 There is one-to-one correspondence between the  equivalence classes of \(R\) and distinct simplest cubic
fields and vice versa, with the exception of one equivalence class $\{k:$ $k=\frac{(p^{3}-3pq^{2}+q^{3})}{(qp\left( -q+p\right) )}$, for some integers $p$ and $q$ such that $0\neq p\neq q\neq 0\}.$
\end{theorem}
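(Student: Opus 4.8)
The plan is to read Theorem~2 as the statement that, for rationals $k$ and $k'$ at which Equation~(1) is irreducible, one has $Q(\alpha)=Q(\alpha')$ if and only if $k\,R\,k'$. Write $E\subseteq Q$ for the set of parameters at which Equation~(1) is reducible over $Q$. The identity established in \S2.1 shows that $\frac{(c+d)\alpha-c}{c\alpha+d}$ is always a root of Equation~(1) with parameter $T(c,d,k)$, so if $\alpha\in Q$ this root is rational and $T(c,d,k)\in E$; hence $E$ is a union of $R$-classes, and so is its complement $Q\setminus E$. By the fact recalled in the introduction, every simplest cubic field equals $Q(\alpha)$ for some $k\in Q\setminus E$. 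Consequently the assignment $[k]\mapsto Q(\alpha)$ is a well-defined surjection from the set of $R$-classes contained in $Q\setminus E$ onto the set of simplest cubic fields, well-definedness and injectivity being precisely the two implications of Theorem~2. So everything reduces to showing that $E$ equals the displayed set and is a single $R$-class.

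To identify $E$: for any root $\rho$ of Equation~(1) (which is automatically different from $0$ and $1$, since $0$ and $1$ never satisfy Equation~(1)) the three roots are $\rho$, $\frac{\rho-1}{\rho}$, $\frac{-1}{\rho-1}$, each a rational function of $\rho$; hence Equation~(1) has a rational root if and only if all three of its roots are rational, equivalently if and only if it is reducible over $Q$. Writing such a root in lowest terms as $\rho=p/q$ and substituting into $k=\rho+\frac{\rho-1}{\rho}-\frac{1}{\rho-1}$ (the expression for $k$ from the proof of Lemma~1), one obtains after clearing denominators
\[
k=\frac{p^{3}-3pq^{2}+q^{3}}{qp(p-q)},
\]
and conversely any $k$ of this form has $p/q$ as a root of Equation~(1); the conditions $0\neq p\neq q\neq 0$ are exactly what keeps the denominator nonzero. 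Thus $E$ is the class displayed in the theorem.

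It remains to see that $E$ is a single $R$-class. We have already noted that $E$ is $R$-closed, so it suffices to connect any two $k,k'\in E$. Let $\alpha,\alpha'\in Q$ be the roots of Equation~(1) attached to $k$ and $k'$. Clearing denominators in $\alpha'(c\alpha+d)=(c+d)\alpha-c$ turns the search for suitable integers $c,d$ into the single linear equation $c(\alpha\alpha'-\alpha+1)+d(\alpha'-\alpha)=0$, whose two coefficients cannot both vanish, since that would force $\alpha=\alpha'$ together with $\alpha^{2}-\alpha+1=0$, which has no real solution. So there is a nonzero rational solution, which we rescale to integers $c,d$ not both zero; the same relation also gives $c\alpha+d\neq 0$, so indeed $\alpha'=\frac{(c+d)\alpha-c}{c\alpha+d}$. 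By the identity of \S2.1 this quantity is a root of $z^{3}-T(c,d,k)z^{2}+(T(c,d,k)-3)z+1=0$; since $\alpha'$ is also a root of $x^{3}-k'x^{2}+(k'-3)x+1=0$ and is neither $0$ nor $1$, subtracting the two cubics yields $(k'-T(c,d,k))\,\alpha'(1-\alpha')=0$, hence $k'=T(c,d,k)$ and $k\,R\,k'$. Deleting the class $E$ from the domain of $[k]\mapsto Q(\alpha)$ then gives the asserted bijection.

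The only step that is more than bookkeeping is this last one: Theorems~1 and~2 were established only under the irreducibility hypothesis, so the behaviour of $R$ on the reducible locus $E$ must be treated directly, and that is where I expect whatever genuine difficulty there is to lie.
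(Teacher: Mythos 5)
The paper states this theorem with no proof whatsoever, so there is no argument of the authors' to compare yours against; what you have written is, in effect, the missing proof, and its architecture is the natural one. The reduction of the bijection on the irreducible locus to Theorem~2, the identification of the reducible parameters by substituting a rational root $\rho=p/q$ into $k=\rho+\frac{\rho-1}{\rho}-\frac{1}{\rho-1}$ (which does simplify to $\frac{p^{3}-3pq^{2}+q^{3}}{pq(p-q)}$, with $0\neq p\neq q\neq 0$ exactly excluding the forbidden values $\rho\in\{0,1\}$ and $q=0$), and the direct argument that the reducible locus is a single $R$-class are all correct. The linear-algebra step is sound: the coefficients of $c(\alpha\alpha'-\alpha+1)+d(\alpha'-\alpha)=0$ vanish simultaneously only if $\alpha^{2}-\alpha+1=0$, which has no rational root, and the same identity does force $c\alpha+d\neq 0$.

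One detail needs to be added to the last step. You invoke the computation of \S 2.1 to conclude that $\frac{(c+d)\alpha-c}{c\alpha+d}$ satisfies the cubic with parameter $T(c,d,k)$, but $T(c,d,k)$ is only defined when its denominator $-c^{2}kd+3c^{2}d-d^{2}ck-d^{3}+c^{3}$ is nonzero, and this denominator equals $-\prod_{i}(c\rho_{i}+d)$ taken over all three roots $\rho_{i}$ of the first cubic. In the reducible case all three roots are rational, so you must also rule out $-d/c$ being $\frac{\alpha-1}{\alpha}$ or $\frac{-1}{\alpha-1}$, not merely $\alpha$. With your explicit solution $(c,d)\propto(\alpha-\alpha',\ \alpha\alpha'-\alpha+1)$, each of these two cases reduces after clearing denominators to $\alpha'(\alpha^{2}-\alpha+1)=0$ or $(\alpha'-1)(\alpha^{2}-\alpha+1)=0$, both impossible since $\alpha'\notin\{0,1\}$ and $x^{2}-x+1$ has no rational root; so the gap closes, but it is a genuine verification given that the whole point of this class is that every root is rational. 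Beyond that, your proof rests on Theorems~1 and~2, which the paper also leaves unproved, but that dependence is clearly the intended logical structure.
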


\bigskip

\end{document}